
\documentclass{birkjour}
%
%
%
 \newtheorem{thm}{Theorem}[section]

 \theoremstyle{definition}
 \newtheorem{defn}[thm]{Definition}
 \theoremstyle{remark}
 \newtheorem{rem}[thm]{Remark}
 \newtheorem*{ex}{Example}
 \numberwithin{equation}{section}

\begin{document}

%
%
%
%
%
%
%
%
%

\title[Some results on harmonic metrics]
 {Some results on harmonic metrics}

\author[Fatima Zohra Kadi]{Fatima Zohra Kadi}

\address{%
Laboratory of Quantum Physics and Mathematical Modeling (LPQ3M) \\
University of Mascara\\
Faculty of Exact Sciences\\
Department of Mathematics \\
BP 305 Route de Mamounia\\
29000 Mascara\\
Algeria}
\email{fatima.kadi@univ-mascara.dz}

\author{Bouazza Kacimi}
\address{%
University of Mascara\\
Faculty of Exact Sciences\\
Department of Mathematics \\
BP 305 Route de Mamounia\\
29000 Mascara\\
Algeria}
\email{bouazza.kacimi@univ-mascara.dz}

\author{Mustafa \"{O}zkan}
\address{ Gazi University \\
Faculty of Sciences\\
Department of Mathematics \\
06500 Ankara\\
Turkey}
\email{ozkanm@gazi.edu.tr}

\subjclass{Primary 53C30; Secondary 53C43}

\keywords{Harmonic maps, Sasaki lift, Horizontal lift, Complete lift.}

\date{January 1, 2004}

\begin{abstract}
In this paper, we prove that the harmonicity of a metric on a pseudo-Riemannian manifold is equivalent to the harmonicity of both its Sasaki (resp. horizontal and complete) lift metric on the tangent bundle and its Sasaki lift metric on the cotangent bundle. Some applications are included.
\end{abstract}
\maketitle
\section{Introduction}
The notion of harmonic maps between (pseudo-)Riemannian manifolds grew out of essential notions in differential geometry, such as geodesics, minimal surfaces and harmonic functions (i.e., real $C^{2}$ functions belonging to the kernel of the Laplace operator). Harmonic maps are solutions to a natural geometrical variational problem, this means that they satisfy the Euler-Lagrange systems see \cite{eells1,eells2}, they are closely related to analysis, to the general relativity, and to nonlinear field theory in theoretical physics see \cite{gauduchon,xin}. This subject has been developed extensively see for example \cite{baird,eells2,eells3,xin} and references therein. In \cite{chen2}, B. Y. Chen and T. Nagano introduced the notions of harmonic metrics and harmonic tensors and obtain a necessary and sufficient condition for an immersion between two Riemannian manifolds
to be harmonic, for recent developments of this topic see \cite{chen1}. C. L. Bejan and S. L. Dru\c{t}\u{a}-Romaniuc in \cite{bejan} proved that the harmonicity of a metric on four-dimensional Walker manifold of signature (2,2) and that of its Sasaki (resp. horizontal) lift metric on the tangent bundle are equivalent, the same conclusion is obtained by A. Zaeim and P. Atashpeykarin in \cite{zaeim1} and  by  A. Zaeim, M. Jafari and M. Yaghoubi in \cite{zaeim2} on non-reductive homogeneous manifold of dimension four and on G\"{o}del-type space time respectively but, in both cases they proved also the equivalence for the complete lift metric on the tangent bundle.

The aim of this note is twofold. The first is to study the relationship between the harmonicity of a metric on a pseudo-Riemannian manifold and that of its Sasaki (resp. horizontal and complete) lift metric on the tangent bundle in more general case. The second is to highlight the relationship between the harmonicity of a metric on a pseudo-Riemannian manifold and that of its Sasaki lift metric on the cotangent bundle. This paper is structured as follows. In section $2$, we give some definitions, properties and results which will be used in the sequel. In
section $3$, we study harmonic metrics lifted to the tangent bundle of a pseudo-Riemannian manifold and we prove that there is an equivalence between the harmonicity of a metric on a pseudo-Riemannian manifold and that of its Sasaki (resp. horizontal and complete) lift metric on the tangent bundle (see Theorem \ref{thm1}). As consequence, we recover the results of \cite{bejan,zaeim1} and \cite{zaeim2}. In section $4$, we deal with harmonic metrics on Egorov spaces (see Theorem \ref{thm2}) and give an example (see Example \ref{ex1}). The section $5$ is devoted to the study of harmonic metrics lifted to the cotangent bundle of a pseudo-Riemannian manifold, we prove also that the harmonicity of a metric and that of its Sasaki lift metric on the cotangent bundle are equivalent (see Theorem \ref{thm3}). As consequence, we obtain the main results of this note (see Theorem \ref{thm4}). In the last section, we apply Theorem \ref{thm3} on four-dimensional Walker manifold of signature (2,2), on G\"{o}del-type spacetime, on non-reductive homogeneous manifold of dimension four and on Egorov space.
\section{Preliminaries}
\subsection{Harmonic maps and harmonic metrics}
Let $(M,g)$ and $(N,h)$ be two pseudo-Riemannian manifolds of dimensions $m$ and $n$ respectively, and let $\varphi:(M,g)\rightarrow(N,h)$ a smooth map between them. The map $\varphi$ is said to be harmonic if it is a critical point of the energy functional defined by
\begin{equation*} E(\varphi,D)=\frac{1}{2}\underset{D}{\int}|d\varphi|^2v_g,
\end{equation*}
for any compact domain $D$ of $M$, where $v_g$ is the volume element of $(M,g)$. If $M$ is compact, we write
\begin{equation*} E(\varphi)=E(\varphi,M).
\end{equation*}
Let $\nabla d\varphi$ be the second fundamental form of $\varphi.$ The trace of $\nabla d\varphi$ with respect to $g$ is called the tension field of $\varphi$, and is denoted by $\tau(\varphi)$. The Euler-Lagrange equation of $E(\varphi,D)$ is \cite{eells3}
\begin{equation*}
\tau(\varphi)=tr(\nabla d\varphi)=0.
\end{equation*}
Let $U\subset M$ be an open set with coordinates $(x^1,\ldots,x^m)$ and $V \subset N$ be an open set with coordinates $(y^1,\ldots,y^n)$, such that $\varphi(U)\subset V$ and suppose that $\varphi$ is locally represented by
\begin{equation*}
y^\alpha=\varphi^\alpha(x^1,\ldots,x^m),\;\; \alpha=1,\ldots,n.
\end{equation*}
Then we have:
\begin{equation}
(\nabla d\varphi)^\gamma_{ij}=\frac{\partial^2\varphi^\gamma}{\partial x^i\partial x^j}- \Gamma^k_{ij}\frac{\partial \varphi^\gamma}{\partial x^k}+{}^N\Gamma^\gamma_{\alpha\beta}(\varphi)\frac{\partial \varphi^\alpha}{\partial x^i}\frac{\partial \varphi^\beta}{\partial x^j},\label{eq1}
\end{equation}
here $\Gamma^k_{ij}$ and $^N\Gamma^\gamma_{\alpha\beta}$ denote the Christoffel symbols of $(M,g)$ and $(N,h)$ respectively. So, $\varphi$ is harmonic if and only if
\begin{equation}\tau(\varphi)=tr (\nabla d\varphi)=g^{ij}(\nabla d\varphi)^\gamma_{ij}\frac{\partial}{\partial y^\gamma}=0. \label{eq2}
\end{equation}
\begin{defn}[\protect\cite{chen2}]
Let $(M,g)$ be a pseudo-Riemannian manifold of dimension $m$ and $\widehat{g}$ be another pseudo-Riemannian metric on $M$. We say that $\widehat{g}$ is a harmonic metric with respect to $g$ if and only if the identity map $I:(M,g)\rightarrow (M,\widehat{g})$ is harmonic.\end{defn}
By taking into account \eqref{eq1} and \eqref{eq2}, $\widehat{g}$ is harmonic with respect to $g$ if and only if
\begin{equation*}
g^{ij}(\widehat{\Gamma}^k_{ij}-\Gamma^k_{ij})=0,
\end{equation*}
or, equivalently
\begin{equation}
tr (G^{-1}(\widehat{\Gamma}^k-\Gamma^k))=0,\label{eq4}
\end{equation}
where $G$ is the matrix presentation of $g$ and $\Gamma^k,\widehat{\Gamma}^k$ are the matrices of the Christoffel symbols of $g$ and $\widehat{g}$ respectively.
\subsection{Geometry of tangent bundle}
Let $(M,g)$ be an $m-$dimensional pseudo-Riemannian manifold and $(TM,\pi, M)$ be its tangent bundle. We recall some facts on the geometry of the tangent bundle, following \cite{yano}. A system of local coordinates $(U,x^i), i=1,\ldots,m $, in $M$ induces on $TM$ a system of local coordinates $(\pi^{-1}(U),x^i,x^{\overline{i}}=u^i),$ $\overline{i}=m+i=m+1,\ldots,2m,$ where $(x^{\overline{i}}=u^i)$ are the components of vector fields on $M$ with respect to the coordinate vector fields $\{\partial_{x^1},\ldots,\partial_{x^m}\}$, here $\partial_{x^i}=\frac{\partial}{\partial{x^i}}.$

Denote the Levi-Civita connection of $g$ by $\nabla$. The tangent space $T_{(x,u)}TM$ at a point $(x,u)$ in $TM$ is the direct sum of the vertical subspace
$\mathcal{V}_{(x,u)}=\ker(d\pi\mid_{(x,u)})$ and the horizontal subspace $\mathcal{H}_{(x,u)}$, with respect to $\nabla$:
\begin{equation*}
T_{(x,u)}TM=\mathcal{H}_{(x,u)}\oplus \mathcal{V}_{(x,u)}.
\end{equation*}
We define the local vector fields on $TM$ as
\begin{equation*}
\delta_{x^i}=\partial _{x^i}-\Gamma^k_{0i}\partial_{x^{\overline{i}}},\quad \text{with}\quad \Gamma^k_{0i}=u^{h} \Gamma^{k}_{hi}
\end{equation*}
The local frame field $\{\delta_{x^i},\partial_{x^{\overline{i}}}\}_{1\leq i\leq m}$ is called the adapted local frame field on $M$, we easily see that the set $\{dx^i,\partial_{x^{\overline{i}}}^{\ast}\}_{1\leq i\leq m}$ is the coframe dual to the adapted local frame field $\{\delta_{x^i},\partial_{x^{\overline{i}}}\}_{1\leq i\leq m}$, where  $\partial_{x^{\overline{i}}}^{\ast}= dx^{\overline{i}}+x^{\overline{h}}\Gamma^i_{hj}dx^{j}$. Let $X|_{U} = X^{i}\partial_{x^i}$ be a local vector field on $M$. The vertical and the horizontal lifts of $X$ are defined with respect to the adapted frame field by:
\begin{equation*}
X^V=X^i\partial_{x^{\overline{i}}},\qquad X^H=X^i\delta _{x^i}.
\end{equation*}
The Sasaki lift metric $g^S$ and the horizontal lift metric $g^H$ of the metric $g$ on $TM$ are defined by
\begin{equation*}
\left\{
  \begin{array}{c}
   g^S(X^H,Y^H)= g(X,Y)\circ \pi,\\
   g^S(X^V,Y^H)= 0, \\
    g^S(X^V,Y^V)= g(X,Y)\circ \pi,
  \end{array}
\right. \qquad \left\{ \begin{array}{c}
   g^H(X^H,Y^H)= g^H(X^V,Y^V)=0,\\
      g^H(X^V,Y^H)= g(X,Y)\circ \pi,
  \end{array}
\right.
\end{equation*}for all vector fields $X,Y\in \Gamma(TM)$. The matrix representations of $g^S$ and $g^H$, respectively are given by
\begin{equation}
G^S=\left(
      \begin{array}{cc}
        g_{ij} & 0 \\
        0 &g_{ij} \\
      \end{array}
    \right), \qquad G^H=\left(
      \begin{array}{cc}
        0& g_{ij}  \\
        g_{ij}& 0 \\
      \end{array}
    \right).\label{eq5}
\end{equation}
Also the inverses of the matrices of $g^S$ and $g^H$ are given by
\begin{equation}
(G^S)^{-1}=\left(
      \begin{array}{cc}
        g^{ij} & 0 \\
        0 &g^{ij} \\
      \end{array}
    \right), \qquad (G^H)^{-1}=\left(
      \begin{array}{cc}
        0& g^{ij}  \\
        g^{ij}& 0 \\
      \end{array}
    \right).\label{eq6}
\end{equation}
The matrices of the Christoffel symbols of $g^S$ and $g^H$ are determined respectively as follows
\begin{align}
^S\Gamma^k&=\left(
             \begin{array}{cc}
               \Gamma^k_{ij} & \frac{1}{2}R^k_{hji}u^h \\
               \\
               \frac{1}{2}R^k_{hij}u^h & 0 \\
             \end{array}
           \right),\quad^S\Gamma^{\overline{k}}=\left(
             \begin{array}{cc}
              -\frac{1}{2}R^k_{ijh}u^h & \Gamma^k_{ij}  \\
              \\
                \Gamma^k_{ij}&0
             \end{array}
           \right),\label{eq7}\\
^H\Gamma^k&=\left(
             \begin{array}{cc}
              \Gamma^k_{ij}& \Gamma^k_{ij} \\
                \Gamma^k_{ij}&0 \\
             \end{array}
           \right),\qquad^H\Gamma^{\overline{k}}=0,\label{eq8}
 \end{align}
 where $R$ is the curvature tensor of $(M,g)$ and $R(\partial_{x_i},\partial_{x_j})\partial_{x_h}=R^k_{ijh}\partial_{x_k}$.
 Now, let $\omega$ be a $1-$form on $M$. The evaluation map is a function $i\omega:TM\rightarrow\mathbb{R}$, defined by $i\omega(p,u)=\omega_p(u)$. As a special case, for each function $f$, $f^C=i(df)$ is called the complete lift of the function $f$. These functions on $TM$ characterize vector fields on $TM$, since, for vector fields $\widetilde{X}$ and $\widetilde{Y}$ on $TM$, $\widetilde{X} (f^C)=\widetilde{Y} (f^C)$ if and only if $\widetilde{X}=\widetilde{Y}$, for all functions $f$. For each vector field $X$ on $M$, its complete lift $X^C$ is the vector field on $TM$ defined by
 \begin{equation*}
 X^C(f^C)=(Xf)^C.
 \end{equation*}
 For a pseudo-Riemannian manifold $(M,g)$, the tangent bundle $TM$ is naturally equipped with the complete lift metric
 \begin{equation*}
  g^C(X^C,Y^C)=(g(X,Y))^C,
 \end{equation*}
 for any $X,Y\in \Gamma(TM)$. The matrices presentations of $g^C$ and its inverse are given by (see \cite{calvino-louzao}, \cite{yano})
 \begin{equation}
 G^C=\left(
       \begin{array}{cc}
         x^{\overline{k}}\cfrac{\partial g_{ij}}{\partial x^k} & g_{ij} \\
         \\
         g_{ij} & 0
       \end{array}
     \right),\qquad
  (G^C)^{-1}=\left(
       \begin{array}{cc}
         0 & g^{ij} \\
         \\
         g^{ij} & x^{\overline{k}}\cfrac{\partial g^{ij}}{\partial x^k}
       \end{array}
     \right).\label{eq10}
 \end{equation}
 Note that $g^C$ is a pseudo-Riemannian metric of neutral signature. The Levi-Civita connections of $g^C$ and $g$ are related as follows
  \begin{equation}
   ^C\Gamma^k=\left(
  \begin{array}{cc}
  \Gamma^k_{ij} & 0 \\
  0 & 0
  \end{array}
  \right),\qquad  ^C\Gamma^{\overline{k}}=\left(
  \begin{array}{cc}
  x^{\overline{l}}\cfrac{\partial}{\partial x^l}\Gamma^k_{ij} & \Gamma^k_{ij} \\
  \\
  \Gamma^k_{ij} & 0            \end{array}          \right).
  \label{eq11}
 \end{equation}
  \section{Harmonic metrics lifted to the tangent bundle}
  \begin{thm}\label{thm1}
  Let $(M^m,g)$ be a pseudo-Riemannian manifold and $\widehat{g}$ is a pseudo-Riemannian metric on $M.$ The following statements are equivalent:

  \begin{itemize}
    \item[(i)] $\widehat{g}$ is harmonic with respect to $g$.
    \item[(ii)] $\widehat{g}^S$ is harmonic with respect to $g^S$.
    \item[(iii)] $\widehat{g}^H$ is harmonic with respect to $g^H$.
    \item[(iv)] $\widehat{g}^C$ is harmonic with respect to $g^C$.
  \end{itemize}
  \end{thm}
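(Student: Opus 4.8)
The plan is to reduce all four statements to the single scalar criterion \eqref{eq4}: on an $n$-dimensional manifold a metric $\widehat{g}$ is harmonic with respect to $g$ precisely when $tr\big(G^{-1}(\widehat{\Gamma}^{a}-\Gamma^{a})\big)=0$ for every contravariant index $a=1,\ldots,n$. Applied on $TM$, which has dimension $2m$, this produces $2m$ scalar equations: the $m$ ``horizontal'' ones indexed by $k$ and the $m$ ``vertical'' ones indexed by $\overline{k}$. I would evaluate all of them for each of the three lifts by substituting the explicit matrices \eqref{eq5}--\eqref{eq11}, and show that in every case the resulting system on $TM$ is equivalent to the system $g^{ij}(\widehat{\Gamma}^{k}_{ij}-\Gamma^{k}_{ij})=0$, $1\le k\le m$, on $M$, which by \eqref{eq4} is exactly statement (i). Writing $D^{k}_{ij}:=\widehat{\Gamma}^{k}_{ij}-\Gamma^{k}_{ij}$ (symmetric in $i,j$), this identifies each of (ii), (iii), (iv) with (i) and gives the whole chain of equivalences.

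For the Sasaki lift, $(G^{S})^{-1}$ is block-diagonal by \eqref{eq6}, so the trace in question is the sum of the $g$-traces of the two diagonal blocks of the Christoffel difference obtained from \eqref{eq7}: for the index $k$ these blocks are $D^{k}_{ij}$ and $0$, giving $g^{ij}D^{k}_{ij}=0$, while for the index $\overline{k}$ they are $-\frac{1}{2}(\widehat{R}^{k}_{ijh}-R^{k}_{ijh})u^{h}$ and $0$, and here the skew-symmetry of the curvature in its first two arguments forces $g^{ij}R^{k}_{ijh}=g^{ij}\widehat{R}^{k}_{ijh}=0$, so the vertical equations are vacuous. For the horizontal lift, \eqref{eq8} gives $^{H}\Gamma^{\overline{k}}={}^{H}\widehat{\Gamma}^{\overline{k}}=0$, so the vertical equations hold trivially, while for the index $k$ the three nonzero blocks of the Christoffel difference all equal $D^{k}_{ij}$ and, after multiplying by the anti-block-diagonal $(G^{H})^{-1}$ and taking the trace, one obtains $2g^{ij}D^{k}_{ij}=0$. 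For the complete lift, \eqref{eq11} shows that the Christoffel difference for the index $k$ is supported in the top-left block whereas $(G^{C})^{-1}$ in \eqref{eq10} has vanishing top-left block, so those equations are trivial; for the index $\overline{k}$ one checks that the $x^{\overline{l}}\partial_{x^{l}}(\cdot)$ terms appearing in $^{C}\widehat{\Gamma}^{\overline{k}}-{}^{C}\Gamma^{\overline{k}}$ and in $(G^{C})^{-1}$ feed only the lower-left block of the product and thus drop out of the trace, which once again equals $2g^{ij}D^{k}_{ij}$.

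I do not expect a genuine obstacle: the argument is a finite matrix computation governed by the tables \eqref{eq5}--\eqref{eq11}. The two points that need attention are, first, that harmonicity on $TM$ is a system of $2m$ equations and not $m$, so the vertical equations really have to be checked and not only the horizontal ones; and second, that the vertical Sasaki equations, which at first glance look as if they might impose a curvature relation between $g$ and $\widehat{g}$, are in fact identically zero because $g^{ij}$ is symmetric while $R^{k}_{ijh}$ and $\widehat{R}^{k}_{ijh}$ are skew in $i,j$. Once these are in place, each of (ii), (iii), (iv) becomes literally the same set of $m$ scalar equations as (i), and the theorem follows.
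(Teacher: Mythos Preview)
Your proposal is correct and follows essentially the same route as the paper: in each case you expand the harmonicity condition \eqref{eq4} on $TM$ into the $k$- and $\overline{k}$-blocks using \eqref{eq5}--\eqref{eq11}, reduce the nontrivial equations to $g^{ij}D^{k}_{ij}=0$, and dispose of the Sasaki vertical equations via the symmetry/antisymmetry pairing of $g^{ij}$ against the curvature. Your treatment of the complete lift is in fact more explicit than the paper's (which simply says ``the same method''), and you correctly use $(G^{H})^{-1}$ in the horizontal case where the paper's displayed system has a typographical $(G^{S})^{-1}$.
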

  \begin{proof}The equivalence between the items (i) and (ii) arises as follows:\\
  For the identity map $I:(TM,g^S)\rightarrow(TM,\widehat{g}^S)$ the harmonicity condition \eqref{eq4} can be expressed by
  \begin{equation}
  \left\{
    \begin{array}{c}
      tr((G^S)^{-1} (^S\widehat{\Gamma }^k- {}^S\Gamma ^k))=0,\\
      tr((G^S)^{-1} (^S\widehat{\Gamma }^{\overline{k}}-{}^S\Gamma ^{\overline{k}}))=0,\label{eq12} \\
    \end{array}
  \right.
  \end{equation}
  for all indices $k=1,\ldots,m$, where $^S\Gamma ^k,{}^S\Gamma ^{\overline{k}}$ are the matrices given by \eqref{eq7} and $^S\widehat{\Gamma}^k,{}^S\widehat{\Gamma }^{\overline{k}}$ the similar matrices for $^S\widehat{g}$. By using \eqref{eq6} and \eqref{eq7}, the first equation of \eqref{eq12} reduces to
  \begin{equation}
  tr(G^{-1}(\widehat{\Gamma }^k-\Gamma^k))=0,\qquad \forall k=1,\ldots,m.\label{eq13}
  \end{equation}
  Now, denoting the curvature tensor of $g$ and $ \widehat{g}$ by $R$ and $\widehat{R}$, respectively, and taking into account \eqref{eq6} and \eqref{eq7}, it follows that the second equation in \eqref{eq12} yields
  \begin{equation}tr(g^{ij}(\widehat{R}^k_{ij0}-R^k_{ij0}))=0,\label{eq14}
  \end{equation}
 where $R^k_{ij0}=R^k_{ijh}u^h$ and $\widehat{R}^k_{ij0}=\widehat{R}^k_{ijh}u^h$. Equivalently, \eqref{eq14} can be
 written as follows
 \begin{equation}
  tr(G^{-1}(\widehat{R}^k-R^k))=0,\qquad \forall k=1,\ldots,m,\label{eq14-1}
  \end{equation}
 where $R^k$ and $\widehat{R}^k$ are the matrices given by $R^k=(R^k_{ij0})$, $\widehat{R}^k=(\widehat{R }^k_{ij0})$, $i,j=1,\ldots,m.$ Since $G^{-1}$ is symmetric and $(\widehat{R}^k-R^k)$ is antisymmetric, then by a result of linear algebra \eqref{eq14-1} is always satisfied. The relations \eqref{eq13} expresses the harmonicity condition of the identity map $I:(M,g)\rightarrow (M,\widehat{g})$. Hence, the metric $\widehat{g}$ is harmonic with respect to $g$, which yields the equivalence between (i) and (ii).

  Using \eqref{eq4}, the harmonicity condition of $\widehat{g}^H$ with respect to $g^H$ can be expressed by:
  \begin{equation}
  \left\{
    \begin{array}{c}
      tr((G^S)^{-1} (^H\widehat{\Gamma }^k-{}^H\Gamma ^k))=0,\\
      tr((G^S)^{-1} (^H\widehat{\Gamma }^{\overline{k}}-{}^H\Gamma ^{\overline{k}}))=0,\label{eq15} \\
    \end{array}
  \right.
  \end{equation}
 for all indices $k=1,\ldots,m$, where $^H\Gamma ^k,{}^H\Gamma ^{\overline{k}}$ are the matrices given by \eqref{eq8} and $^H\widehat{\Gamma}^k,{}^H\widehat{\Gamma }^{\overline{k}}$ the similar matrices for $^H\widehat{g}$. So by \eqref{eq8} the system \eqref{eq15} gives
 \begin{equation*}
  tr(G^{-1}(\widehat{\Gamma }^k-\Gamma^k))=0\qquad \forall k=1,\ldots,m.
  \end{equation*}
  This equation shows exactly the same conditions of harmonicity of the identity map $I:(M,g)\rightarrow(M,\widehat{g})$.

  The third equivalence between (i) and (iv) follows by the same method as above by using \eqref{eq10} and \eqref{eq11}. Thus the proof is complete.
  \end{proof}
  \begin{rem}Theorem \ref{thm1}, recovers the results due to \cite{bejan,zaeim1} and \cite{zaeim2}.
  \end{rem}
  \section{Harmonic metrics on Egorov spaces}
The Egorov spaces are Lorentzian manifolds $(\mathbb{R}^m,g_f)$, $m\geq 3$, where $f$ is a positive smooth function of a real variable and
\begin{equation}
g_f=f(x^m)\sum^{m-2}_{i=1}(dx^i)^2+2dx^{m-1}dx^m.\label{eq16}
\end{equation}
 These manifolds are named after I. P. Egorov, who first introduced and studied them in \cite{egorov}. In \cite{batat}, W. Batat, G. Calvaruso and B. De Leo  described some curvature properties of Egorov spaces. Now, we study the harmonicity condition for Lorentzian metrics which are raised from Egorov space.
 \begin{thm}\label{thm2}
 Let $(\mathbb{R}^m,g_f)$, $m\geq 3$ be an Egorov space with the metric defined by \eqref{eq16}. Then, the metric
 \begin{equation}\label{eq17}
\widehat{g}_{\widehat{f}}=\widehat{f}(x^m)\sum^{m-2}_{i=1}(dx^i)^2+2dx^{m-1}dx^m,
\end{equation}
is harmonic with respect to $g_f$ if and only if
\begin{equation*}
\widehat{f}'=f'.
\end{equation*}
 \end{thm}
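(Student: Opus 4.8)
The plan is to verify the criterion \eqref{eq4} for the identity map $I\colon(\mathbb{R}^m,g_f)\to(\mathbb{R}^m,\widehat g_{\widehat f})$ by a direct computation of Christoffel symbols; no lifting to a bundle is needed here. Throughout I let indices $a,b$ run over $1,\dots,m-2$ and $i,j,k$ over $1,\dots,m$. Reading off \eqref{eq16}, the only non-constant components of $g_f$ are $g_{aa}=f(x^m)$, which depend on $x^m$ alone, while $g_{m-1,m}=g_{m,m-1}=1$ is constant; consequently $g^{aa}=1/f$, $g^{m-1,m}=g^{m,m-1}=1$, all other entries of the inverse metric vanish, and the only non-vanishing first derivative of a metric component is $\partial_m g_{aa}=f'$.

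Next I would compute the Levi-Civita connection of $g_f$ from $\Gamma^k_{ij}=\tfrac12 g^{kl}(\partial_i g_{jl}+\partial_j g_{il}-\partial_l g_{ij})$. A short check of cases (using that nothing depends on $x^1,\dots,x^{m-1}$ and that $g_{m-1,m}$ is constant) shows that the only non-zero symbols are
\begin{equation*}
\Gamma^a_{am}=\Gamma^a_{ma}=\frac{f'}{2f},\qquad \Gamma^{m-1}_{aa}=-\frac{f'}{2},\qquad a=1,\dots,m-2;
\end{equation*}
in particular $\Gamma^{m}_{ij}=0$ for all $i,j$. The corresponding symbols $\widehat\Gamma^k_{ij}$ of $\widehat g_{\widehat f}$ are obtained by replacing $f$ with $\widehat f$.

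Then I would substitute into $g^{ij}(\widehat\Gamma^k_{ij}-\Gamma^k_{ij})=0$ and examine each value of $k$. For $k=a\le m-2$ the only possibly non-zero symbols sit at $(i,j)\in\{(a,m),(m,a)\}$, where the inverse-metric entry $g^{ij}$ vanishes, so the equation holds trivially; for $k=m$ both connections have vanishing symbols, so again it holds trivially. The single non-trivial equation is the one for $k=m-1$, namely
\begin{equation*}
\sum_{a=1}^{m-2}g^{aa}\bigl(\widehat\Gamma^{m-1}_{aa}-\Gamma^{m-1}_{aa}\bigr)=\frac{m-2}{2f}\,(f'-\widehat f')=0 .
\end{equation*}
Since $m\ge 3$ and $f>0$, this is equivalent to $\widehat f'=f'$, which proves the theorem.

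I do not expect a genuine obstacle: the argument is pure bookkeeping, and the only points requiring care are confirming that $\Gamma^m_{ij}\equiv 0$ and that the symbols $\Gamma^a_{am}$ are never paired with a non-zero inverse-metric coefficient when $k\le m-2$, together with the use of the hypothesis $m\ge 3$, which is precisely what makes the factor $m-2$ non-zero so that the $k=m-1$ equation is not vacuous.
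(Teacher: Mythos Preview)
Your proof is correct and follows essentially the same approach as the paper: both compute the inverse metric and Christoffel symbols of $g_f$ explicitly, observe that the only non-trivial contribution to $g^{ij}(\widehat\Gamma^k_{ij}-\Gamma^k_{ij})$ occurs for $k=m-1$, and obtain the same expression $\tfrac{m-2}{2f}(f'-\widehat f')$. The paper organizes the computation in matrix form while you use index notation, but the argument is the same.
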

 \begin{proof}Let $(\mathbb{R}^m,g_f)$, $m\geq 3$ be an Egorov space with the metric defined by \eqref{eq16}, and $\widehat{g}_{\widehat{f}}$ be the metric
 \begin{equation*}
\widehat{g}_{\widehat{f}}=\widehat{f}(x^m)\sum^{m-2}_{i=1}(dx^i)^2+2dx^{m-1}dx^m,
\end{equation*}
which is harmonic with respect to the metric $g_f$. Put $f(x^{m})=f$, the matrix presentation of the metric $g_f$ is
\begin{equation}\label{eq18}
G_f=\left(
\begin{array}{c|c}
\begin{array}{ccc}
  f &  &O\\
 & \ddots& \\
0 & & f\end{array}&O\\
\hline\\
O&
\begin{array}{cc}
 0&1 \\
1&0    \end{array}
\end{array}\right).
\end{equation}
It's obvious that
\begin{equation}\label{eq19}
(G_f)^{-1}=\left(
\begin{array}{c|c}
\begin{array}{ccc}
  \frac{1}{f} &  &O\\
 & \ddots& \\
O & & \frac{1}{f}\end{array}&O\\
\hline\\
O&
\begin{array}{cc}
 0&1 \\
1&0    \end{array}
\end{array}\right).
\end{equation}
Put $\Gamma^k=(\Gamma^k_{ij})$, $i,j=1,\ldots,m$ and $k=1,\ldots,m,$ then from \cite{batat}, the only possible non-vanishing Christoffel symbols are the following ones:
\begin{equation}\label{eq19-1}
    \Gamma^{m-1}_{ii}=\frac{-f'}{2},\;\;\;\Gamma^i_{im}=\frac{f'}{2f},\;\;\;i=1,\cdots,m-2.
\end{equation}
Thus
\begin{equation*}
\Gamma^1=\left( \begin{array}{c|c}
O&
\begin{array}{c}
\cfrac{f'}{2f}\\
0\\\vdots\\0\end{array}\\
\hline
\begin{array}{cccc}
\cfrac{f'}{2f}&
0&\cdots& 0\end{array}&0
\end{array}
           \right),
\end{equation*}
\begin{equation*}
\Gamma^2=\left( \begin{array}{c|c}
O&
\begin{array}{c}
0\\
\cfrac{f'}{2f}\\0\\\vdots\\0\end{array}\\
\hline
\begin{array}{ccccc}
0&
\cfrac{f'}{2f}&0&\cdots&0\end{array}&0
\end{array}
           \right).
\end{equation*}
And so on until we arrive to
\begin{equation*}
\Gamma^{m-2}=\left( \begin{array}{c|c}
O&
\begin{array}{c}
0\\
\vdots\\0\\\cfrac{f'}{2f}\\0\end{array}\\
\hline
\begin{array}{ccccc}
0&
\cdots&0&\cfrac{f'}{2f}&0\end{array}&0
\end{array}
           \right)
\end{equation*}
and
\begin{equation*}
\Gamma^{m-1}=\left(
\begin{array}{c|c}
\begin{array}{ccc}
-\cfrac{f'}{2} &  & O \\
& \ddots &  \\
O& & -\cfrac{f'}{2 }\end{array}&O\\
\hline\\
O& \begin{array}{cc}
0 & 0 \\
0 & 0
\end{array}
\end{array}
\right),\qquad\Gamma^{m}=0.
\end{equation*}
The Levi-Civita components $\widehat{\Gamma}^1,\ldots ,\widehat{\Gamma}^m$ of the Lorentzian metric given in \eqref{eq17}, are obviously deduced by replacing $\widehat{f}$ instead of $f$ in $\Gamma^1,\ldots ,\Gamma^m$. By direct computations, we find the only non-zero trace as follows:
\begin{equation*}
tr(G^{-1}_f(\widehat{\Gamma}^{m-1}-\Gamma^{m-1}))=\cfrac{(m-2)(f'-\widehat{f}')}{2f}.
\end{equation*}
Thus, by virtue of \eqref{eq4}, the identity map $I:(\mathbb{R}^m,g_f)\rightarrow (\mathbb{R}^m,\widehat{g}_{\widehat{f}})$ is harmonic if and only if $\widehat{f}'=f'$, which completes the proof.
 \end{proof}
 \begin{ex}\label{ex1}
 Let $(M,g)$ be an $m-$dimensional pseudo-Riemannian manifold and $TM$ be its tangent bundle. The Sasaki and horizontal lift metrics of $g$ with respect to the local coordinates $(x^i,x^{\overline{i}})$ of $TM$ are respectively \cite{yano}
 \begin{align}
 g^S&=g_{ij}dx^{i} dx^{j}+g_{ij}\partial_{x^{\overline{i}}}^{\ast}\partial_{x^{\overline{j}}}^{\ast},\label{eqS1}\\
  g^H&=2g_{ij}dx^{i}\partial_{x^{\overline{j}}}^{\ast}\label{eqS2}.
 \end{align}
Then, by virtue of \eqref{eq10}, \eqref{eq16}, \eqref{eq19-1}, \eqref{eqS1} and \eqref{eqS2}, for an  Egorov space $(\mathbb{R}^m,g_f)$, $m\geq3$, the Sasaki, horizontal and complete lift metrics of $g_f$ with respect to the local coordinates $(x^i,x^{\overline{i}})$ of the tangent bundle of $\mathbb{R}^m$ are respectively
\begin{align*}
 g^S_f&=f\sum^{m-2}_{i=1}(dx^i)^2+2dx^{m-1}dx^m+f\sum^{m-2}_{i=1}(dx^{\overline{i}})^2+
 2dx^{\overline{m-1}}dx^{\overline{m}}\\&+ f'\sum^{m-2}_{i=1}x^{\overline{i}}dx^mdx^{\overline{i}}-
 f'\sum^{m-2}_{i=1}x^{\overline{i}}dx^{\overline{m}}dx^{i}+ \frac{(f')^{2}}{4f}\sum^{m-2}_{i=1}(x^{\overline{i}})^{2}(dx^m)^{2},\\
  g^H_f&=2f(x^m)\sum^{m-2}_{i=1}dx^idx^{\overline{i}}+2dx^{m-1}dx^{\overline{m}}+2dx^{\overline{m-1}}dx^m,\\
  g^C_f&=2f(x^m)\sum^{m-2}_{i=1}dx^idx^{\overline{i}}+2dx^{m-1}dx^{\overline{m}}+
  2dx^{\overline{m-1}}dx^m+x^{\overline{m}}f'(x^m)\sum^{m-2}_{i=1}(dx^i)^2.
 \end{align*}
  By using Theorem \ref{thm1} and Theorem \ref{thm2}, the metric:
  \begin{enumerate}
   \item $g_{e^{x^m}+K}$ is harmonic with respect to $g_{e^{x^m}}$,
   \item $g^S_{e^{x^m}+K}$ is harmonic with respect to $g^S_{e^{x^m}}$,
   \item $g^H_{e^{x^m}+K}$ is harmonic with respect to $g^H_{e^{x^m}}$,
   \item $g^C_{e^{x^m}+K}$ is harmonic with respect to $g^C_{e^{x^m}}$, where $K$ is positive constant.
   \end{enumerate}
\end{ex}
 \section{Harmonic metrics lifted to the cotangent bundle}
 Let $(M,g)$ be an $m-$dimensional pseudo-Riemannian manifold, $T^*M$ its cotangent bundle, and $\widetilde{\pi}$ the natural projection $T^*M\rightarrow M$. We recall some basic properties on the geometry of the cotangent bunde following \cite{salimov,yano}. A system of local coordinates $(U,x^i), i=1,\ldots,m$ in $M$ induces on $T^*M$ a system of local coordinates $( \widetilde{\pi}^{-1}(U),x^i,x^{\widetilde{i}}=p_i),$ $\widetilde{i}=m+i=m+1,\ldots,2m$, where $(x^{\widetilde{i}}=p_i)$ is the components of covector $p$ in each cotangent space $T^*_xM$, $x\in U$ with respect to the natural coframe $\{dx^i\}$.

 Let $X=X^i\frac{\partial}{\partial x^i}=X^i\partial_{x^i}$ and $\omega=\omega_idx^i$ be the local expressions in $U\subset M$ of a vector and covector (1-form) fields $X$ and $\omega$, respectively.
 Then the horizontal lift $^H\!X$ of $X$ and the vertical lift $^V\!\omega$ of $\omega$ are given respectively, by
 \begin{align*}
 ^H\!X&=X^i\partial_{x^i}+p_h\Gamma^h_{ij}X^j\partial_{x^{\widetilde{i}}};\\
  ^V\!\omega&=\omega_i\partial_{x^{\widetilde{i}}}
   \end{align*}
   with respect to the natural frame $\{\partial_{x^i},\partial_{x^{\widetilde{i}}}\}$, where $\Gamma^h_{ij}$ are components of the Levi-Civita connection $\nabla$ of $M$. Define the local vector fields on $T^*M$ as
 \begin{equation*}
    \widetilde{ \delta}_{x^i}=\partial_{x^i}+p_a\Gamma^a_{hi}\partial_{x^{\widetilde{i}}},
 \end{equation*}
 we call the set $\{\widetilde{ \delta}_{x^i},\partial_{x^{\widetilde{i}}}\}$, the frame adapted to $\nabla$, it's clear that the set $\{dx^i,\partial_{x^{\widetilde{i}}}^{\ast}\}_{1\leq i\leq m}$ is the coframe dual to frame $\{\widetilde{ \delta}_{x^i},\partial_{x^{\widetilde{i}}}\}$, where $\partial_{x^{\widetilde{i}}}^{\ast}= dx^{\widetilde{i}}-p_a\Gamma^a_{hi}dx^{h}$. For each $x\in M$ the scalar product $g^{-1}=(g^{ij})$ is defined on the cotangent space $\widetilde{\pi}^{-1}(x)=T^*_xM$ by
 \begin{equation*}
    g^{-1}(\omega,\theta)=\omega_i\theta_jg^{ij},
 \end{equation*}
for all covectors $\omega$ and $\theta$ on $M$.

A Sasakian metric $g^{\widetilde{S}}$ is defined on $T^*M$ by the three equations
  \begin{equation*} \left\{
     \begin{array}{c}
       g^{\widetilde{S}}(^V\!\omega,^V\!\theta )={}^V\!(g^{-1}(\omega,\theta))=g^{-1}(\omega,\theta)\circ \pi, \label{eq22}\\
       g^{\widetilde{S}}(^V\!\omega,^H\!Y ) =0,  \\
       g^{\widetilde{S}}(^H\!X,^H\! Y ) ={}^V\!(g(X,Y))=g(X,Y)\circ \pi, \\
     \end{array}
   \right.
   \end{equation*}
   for any $X,Y\in \Gamma(TM)$ and any covectors $\omega$ and $\theta$ on $M$. The matrix representations of $g^{\widetilde{S}}$ and its inverse are given by, respectively
   \begin{equation}
   G^{\widetilde{S}}=\left(
         \begin{array}{cc}
           g_{ij}& 0 \\
           0 & g^{ij} \\
         \end{array}
       \right),\qquad (G^{\widetilde{S}})^{-1}=\left(
         \begin{array}{cc}
           g^{ij}& 0 \\
           0 & g_{ij} \\
         \end{array}
       \right),\label{eq25}
   \end{equation}
   with respect to the adapted frame $\{\widetilde{ \delta}_{x^i},\partial_{x^{\widetilde{i}}}\}$. The matrices of the Christoffel symbol of $g^{\widetilde{S}}$ are given as follows :
 \begin{align}
^{\widetilde{S}}\Gamma^k&=\left(
                  \begin{array}{cc}
                    \Gamma^k_{ij} & \frac{1}{2}p_m(R^k_{.i.})^{jm} \\
                    \\
                    \frac{1}{2}p_m(R^k_{.j.})^{im}& 0 \\
                  \end{array}
                \right),\label{eq26}\\
 ^{\widetilde{S}}\Gamma^{\widetilde{k}}&=\left(
                  \begin{array}{cc}
                    \frac{1}{2}p_m R^m_{ijk}& -\Gamma^j_{ik}  \\
                   \\
                    -\Gamma^i_{jk}& 0 \\
                  \end{array}
                \right),\label{eq27}
  \end{align}
where $(R^k_{.i.})^{jm}=g^{kt}g^{js}R^m_{tis}.$
\begin{thm}\label{thm3}
Let $(M,g)$ be a pseudo-Riemannian manifold and $\widehat{g}$ is a pseudo-Riemannian metric on $M$. The following statements are equivalent:
\begin{itemize}
    \item[(i)] $\widehat{g}$ is harmonic with respect to $g$.
    \item[(ii)] $\widehat{g}^{\widetilde{S}}$ is harmonic with respect to $g^{\widetilde{S}}$.
\end{itemize}
\end{thm}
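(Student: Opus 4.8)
The plan is to imitate the proof of Theorem \ref{thm1}: apply the harmonicity criterion \eqref{eq4} to the identity map $I:(T^*M,g^{\widetilde S})\to(T^*M,\widehat g^{\widetilde S})$ on the $2m$-dimensional cotangent bundle, working in the adapted frame $\{\widetilde\delta_{x^i},\partial_{x^{\widetilde i}}\}$. Writing the $2m$ target coordinates as $x^k$ ($k=1,\dots,m$) and $x^{\widetilde k}$ ($\widetilde k=m+1,\dots,2m$), condition \eqref{eq4} becomes the system
\begin{equation*}
tr\bigl((G^{\widetilde S})^{-1}({}^{\widetilde S}\widehat\Gamma^k-{}^{\widetilde S}\Gamma^k)\bigr)=0,\qquad
tr\bigl((G^{\widetilde S})^{-1}({}^{\widetilde S}\widehat\Gamma^{\widetilde k}-{}^{\widetilde S}\Gamma^{\widetilde k})\bigr)=0,
\end{equation*}
for all $k=1,\dots,m$, where ${}^{\widetilde S}\Gamma^k,{}^{\widetilde S}\Gamma^{\widetilde k}$ are the $2m\times 2m$ matrices \eqref{eq26}--\eqref{eq27} and ${}^{\widetilde S}\widehat\Gamma^k,{}^{\widetilde S}\widehat\Gamma^{\widetilde k}$ the analogous ones for $\widehat g^{\widetilde S}$. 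The key structural observation is that $(G^{\widetilde S})^{-1}$ is block diagonal by \eqref{eq25}, so the trace of its product with a $2m\times 2m$ matrix only sees the two diagonal $m\times m$ blocks of that matrix; the curvature-carrying off-diagonal blocks in \eqref{eq26} and \eqref{eq27} drop out automatically.

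First I would handle the indices $k=1,\dots,m$. From \eqref{eq26} the diagonal blocks of ${}^{\widetilde S}\Gamma^k$ are $\Gamma^k_{ij}$ and $0$, so the first trace collapses, using $(G^{\widetilde S})^{-1}=\mathrm{diag}(g^{ij},g_{ij})$ and symmetry of $\Gamma^k$ in its lower indices, to $g^{ij}(\widehat\Gamma^k_{ij}-\Gamma^k_{ij})=tr(G^{-1}(\widehat\Gamma^k-\Gamma^k))$. Thus the first batch of $m$ equations is precisely the harmonicity condition \eqref{eq4} for $I:(M,g)\to(M,\widehat g)$.

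Next, for the indices $\widetilde k$, the diagonal blocks of ${}^{\widetilde S}\Gamma^{\widetilde k}$ in \eqref{eq27} are $\tfrac12 p_m R^m_{ijk}$ and $0$, so the second trace reduces to $\tfrac12 p_m\,g^{ij}(\widehat R^m_{ijk}-R^m_{ijk})$. Setting $R^k=(p_m R^m_{ijk})_{i,j}$ and $\widehat R^k=(p_m\widehat R^m_{ijk})_{i,j}$, this is $\tfrac12\,tr(G^{-1}(\widehat R^k-R^k))$; since $G^{-1}$ is symmetric while $\widehat R^k-R^k$ is antisymmetric in $i,j$ (the curvature operator being skew in its first two slots), this vanishes identically, by the same linear-algebra fact invoked around \eqref{eq14-1}. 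Hence the full system of $2m$ equations for $g^{\widetilde S},\widehat g^{\widetilde S}$ is equivalent to the single system \eqref{eq4} on the base, which gives (i) $\Leftrightarrow$ (ii).

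The only delicate point — the main obstacle — is the block-matrix bookkeeping: checking that multiplication by the block-diagonal $(G^{\widetilde S})^{-1}$ annihilates the off-diagonal $\tfrac12 p_m(R^k_{.i.})^{jm}$-type blocks of \eqref{eq26}--\eqref{eq27} in the trace, and that the one curvature term surviving in the $\widetilde k$-equations, $\tfrac12 p_m g^{ij}(\widehat R^m_{ijk}-R^m_{ijk})$, is a symmetric-times-antisymmetric contraction and therefore zero regardless of $p$. Once this is in place, the rest is the same substitution used in Theorem \ref{thm1}.
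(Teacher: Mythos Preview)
Your proposal is correct and follows essentially the same argument as the paper: you write the harmonicity condition \eqref{eq4} on $T^*M$ as the pair of trace equations in \eqref{eq28}, use the block-diagonal form of $(G^{\widetilde S})^{-1}$ together with \eqref{eq26}--\eqref{eq27} to reduce the first to $tr(G^{-1}(\widehat\Gamma^k-\Gamma^k))=0$ and the second to a symmetric-times-antisymmetric curvature contraction that vanishes identically. The only cosmetic slip is that the off-diagonal blocks of \eqref{eq27} are Christoffel symbols, not curvature terms, but your subsequent computation treats them correctly.
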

\begin{proof}For the identity map $I:(T^*M,g^{\widetilde{S}})\rightarrow(T^*M,\widehat{g}^{\widetilde{S}})$ the harmonicity condition \eqref{eq4} can be described by:
\begin{equation}
\left\{
  \begin{array}{c}
    tr((G^{\widetilde{S}})^{-1}(^{\widetilde{S}}\widehat{\Gamma}^k- {}^{\widetilde{S}}\Gamma^k))=0,\\
    tr((G^{\widetilde{S}})^{-1}(^{\widetilde{S}}\widehat{\Gamma}^{\widetilde{k}}- {}^{\widetilde{S}}\Gamma^{\widetilde{k}}))=0, \\
  \end{array}
\right.\label{eq28}
\end{equation}
for all indices $k=1,\ldots,m,$ by virtue of \eqref{eq25} and \eqref{eq26}, the first equation of \eqref{eq28} reduces to
\begin{equation*} tr(G^{-1}(\widehat{\Gamma}^k-\Gamma^k))=0\qquad \forall k=1,\ldots,m.
\end{equation*}Now, taking into account \eqref{eq25} and \eqref{eq27}, it follows that the second equation in \eqref{eq28} yields
\begin{equation} tr(g^{ij}(\widehat{R}^o_{ijk}-R^o_{ijk}))=0, \label{eq29}
\end{equation}
where $R^o_{ijk}=p_h R^h_{ijk}$ and $\widehat{R}^o_{ijk}=p_h \widehat{R}^h_{ijk}$. Equivalently, \eqref{eq29} can be
 written as follows
 \begin{equation}
  tr(G^{-1}(\widehat{R}_k-R_k))=0,\qquad \forall k=1,\ldots,m,\label{eq29-1}
  \end{equation}
 where $R_k$ and $\widehat{R}_k$ are the matrices given by $R_k=(R^o_{ijk})$, $\widehat{R}_k=(\widehat{R}^o_{ijk})$ , $i,j=1,\ldots,m.$
Since $G^{-1}$ is symmetric and $(\widehat{R}_k-R_k))$ is antisymmetric, then \eqref{eq29-1} is always verified. Hence the metric $\widehat{g}$ is harmonic with respect to $g$, which gives the equivalence between (i) and (ii).
\end{proof}
From Theorem \ref{thm1} and Theorem \ref{thm3}, we obtain
\begin{thm}\label{thm4}
  Let $(M^m,g)$ be a pseudo-Riemannian manifold and $\widehat{g}$ is a pseudo-Riemannian metric on $M.$ The following statements are equivalent:
  \begin{itemize}
    \item[(i)] $\widehat{g}$ is harmonic with respect to $g$.
    \item[(ii)] $\widehat{g}^S$ is harmonic with respect to $g^S$.
    \item[(iii)] $\widehat{g}^H$ is harmonic with respect to $g^H$.
    \item[(iv)] $\widehat{g}^C$ is harmonic with respect to $g^C$.
    \item[(v)] $\widehat{g}^{\widetilde{S}}$ is harmonic with respect to $g^{\widetilde{S}}$.
  \end{itemize}
  \end{thm}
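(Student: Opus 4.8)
The proof is essentially a bookkeeping exercise that glues Theorem \ref{thm1} to Theorem \ref{thm3}, so the plan is short. First I would observe that Theorem \ref{thm1} already establishes the equivalence of (i), (ii), (iii) and (iv), and that Theorem \ref{thm3} establishes the equivalence of (i) and (v). Both results pivot on the very same condition (i), namely that the identity map $I:(M,g)\to(M,\widehat g)$ be harmonic, equivalently $tr(G^{-1}(\widehat\Gamma^k-\Gamma^k))=0$ for every $k=1,\ldots,m$. Hence transitivity of logical equivalence immediately closes the chain $(i)\Leftrightarrow(ii)\Leftrightarrow(iii)\Leftrightarrow(iv)\Leftrightarrow(v)$, and there is nothing further to compute.

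If one prefers a self-contained argument instead of a mere citation, I would re-run the two reductions in parallel. For the tangent-bundle lifts one writes out the harmonicity condition \eqref{eq4} for $I:(TM,g^S)\to(TM,\widehat g^S)$ (and likewise for the $H$- and $C$-lifts) using the block forms \eqref{eq5}--\eqref{eq11}: the ``unbarred'' block collapses to \eqref{eq13}, i.e. to (i), while the ``barred'' block becomes a trace of $G^{-1}$ against the antisymmetric matrix $\widehat R^k-R^k$ and therefore vanishes automatically. For the cotangent-bundle lift one repeats this with the block forms \eqref{eq25}--\eqref{eq27}: the unbarred block is again (i), and the barred block is $tr(G^{-1}(\widehat R_k-R_k))$, which vanishes by the same symmetric-times-antisymmetric argument.

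The only ingredient that is not purely formal --- and hence the step I would single out as the \emph{main obstacle}, although it is already dispatched inside the proofs of Theorems \ref{thm1} and \ref{thm3} --- is the verification that in each case the curvature-type block contributes nothing to the trace. This rests on the elementary linear-algebra fact that $tr(SA)=0$ whenever $S$ is symmetric and $A$ is skew-symmetric, applied to $S=G^{-1}$ together with the skew parts $\widehat R^k-R^k$ in the tangent case and $\widehat R_k-R_k$ in the cotangent case. Granting this (as we may, by the earlier theorems), the desired five-fold equivalence of Theorem \ref{thm4} follows at once.
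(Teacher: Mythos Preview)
Your proposal is correct and matches the paper's approach exactly: the paper states Theorem~\ref{thm4} immediately after Theorem~\ref{thm3} with the single sentence ``From Theorem~\ref{thm1} and Theorem~\ref{thm3}, we obtain'', and you have simply made explicit the transitivity argument that glues those two results together. Your optional self-contained rerun of the reductions, including the symmetric-times-antisymmetric trace observation, is faithful to what the paper does inside the proofs of Theorems~\ref{thm1} and~\ref{thm3}, so there is nothing to add or correct.
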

\section{Applications}
In this section, we apply Theorem \ref{thm3} on four-dimensional Walker manifolds of signature (2,2), on G\"{o}del-type spacetimes, on non-reductive homogeneous manifolds of dimension four and on Egorov spaces.
\subsection{Four-dimensional Walker manifolds of signature (2,2)}
Let $(M,g)$ be a four-dimensional Walker manifold of signature (2,2). We know from \cite{brozos} that there exists local coordinates $(x^1,x^2,x^3,x^4)$ such that $g$ is described as
\begin{equation}
g_{a,b,c}=2dx^1dx^4+2dx^2dx^3+a(dx^3)^2+b(dx^4)^2+2cdx^3dx^4\label{equ1}
\end{equation}
where $a,b,c$ are arbitrary smooth functions on $M$. Let $\widehat{g}_{\widehat{a},\widehat{b},\widehat{c}}$ an arbitrary Walker metric, the authors in \cite{bejan} proved that $\widehat{g}_{\widehat{a},\widehat{b},\widehat{c}}$ is harmonic with respect to $g_{a,b,c}$ if and only if the following relations are established
\begin{equation}
\left\{
  \begin{array}{c}
   a_t+c_x= \widehat{a}_t+\widehat{c}_x\\
    c_t+b_x=\widehat{c}_t+\widehat{b}_x \\
  \end{array}
\right. .\label{equ2}
\end{equation}
By virtue of Theorem \ref{thm3} and \eqref{equ2}, we get the following
\begin{thm} Let $(M,g_{a,b,c})$ be a four-dimensional Walker manifold and $\widehat{g}_{\widehat{a},\widehat{b},\widehat{c}}$ an arbitrary Walker metric on $M$. Then the Sasaki lift metric $\widehat{g}_{\widehat{a},\widehat{b},\widehat{c}}^{\widetilde{S}}$ is harmonic with respect to
$ g_{a,b,c}^{\widetilde{S}}$ if and only if \eqref{equ2} holds.
\end{thm}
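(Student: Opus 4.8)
The plan is to obtain the statement as a direct corollary of Theorem \ref{thm3} combined with the characterization of harmonic Walker metrics recalled in \eqref{equ2}. First I would check that the hypotheses of Theorem \ref{thm3} are satisfied: both $g_{a,b,c}$ and $\widehat{g}_{\widehat{a},\widehat{b},\widehat{c}}$, given by \eqref{equ1} with the obvious substitution, are pseudo-Riemannian (indeed neutral, signature $(2,2)$) metrics on the same four-manifold $M$. Hence Theorem \ref{thm3}, applied with $g=g_{a,b,c}$ and $\widehat{g}=\widehat{g}_{\widehat{a},\widehat{b},\widehat{c}}$, gives the equivalence: $\widehat{g}_{\widehat{a},\widehat{b},\widehat{c}}$ is harmonic with respect to $g_{a,b,c}$ if and only if the cotangent Sasaki lift $\widehat{g}_{\widehat{a},\widehat{b},\widehat{c}}^{\widetilde{S}}$ is harmonic with respect to $g_{a,b,c}^{\widetilde{S}}$.

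Next I would invoke the result of \cite{bejan}, stated here as \eqref{equ2}, which says that $\widehat{g}_{\widehat{a},\widehat{b},\widehat{c}}$ is harmonic with respect to $g_{a,b,c}$ precisely when $a_t+c_x=\widehat{a}_t+\widehat{c}_x$ and $c_t+b_x=\widehat{c}_t+\widehat{b}_x$. Chaining this with the equivalence from Theorem \ref{thm3} yields: $\widehat{g}_{\widehat{a},\widehat{b},\widehat{c}}^{\widetilde{S}}$ is harmonic with respect to $g_{a,b,c}^{\widetilde{S}}$ if and only if \eqref{equ2} holds, which is the assertion.

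For completeness one could instead argue directly from \eqref{eq4} and \eqref{eq28}, bypassing the black-box use of Theorem \ref{thm3}. In that case I would list the nonvanishing Christoffel symbols $\Gamma^k_{ij}$ of \eqref{equ1} in the coordinates $(x^1,x^2,x^3,x^4)$ (together with the analogous ones for $\widehat{g}$), build the lifted matrices ${}^{\widetilde{S}}\Gamma^k$ and ${}^{\widetilde{S}}\Gamma^{\widetilde{k}}$ from \eqref{eq26}--\eqref{eq27}, and compute the two families of traces against $(G^{\widetilde{S}})^{-1}$ in \eqref{eq25}. The block-diagonal form of \eqref{eq25} collapses the $k$-equation of \eqref{eq28} to $tr(G^{-1}(\widehat{\Gamma}^k-\Gamma^k))=0$, i.e. the base harmonicity condition, while the $\widetilde{k}$-equation involves only the curvature block of \eqref{eq27}, which is antisymmetric in $i,j$ and therefore traces to zero against the symmetric $G^{-1}$, exactly as in the proof of Theorem \ref{thm3}; one is then left with the base condition, which by the computation of \cite{bejan} is \eqref{equ2}.

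I do not expect a genuine obstacle here: the content is entirely carried by Theorem \ref{thm3} and \cite{bejan}. The only point requiring care is bookkeeping — making sure the Walker Christoffel symbols and the trace computation are handled correctly if the self-contained route is taken — so I would present the short proof via Theorem \ref{thm3} and, at most, remark on the direct verification.
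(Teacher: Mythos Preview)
Your proposal is correct and matches the paper's own argument exactly: the theorem is stated as an immediate consequence of Theorem \ref{thm3} together with the characterization \eqref{equ2} from \cite{bejan}, with no further proof given. Your optional direct verification via \eqref{eq25}--\eqref{eq27} is a harmless elaboration of the same idea.
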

\subsection{G\"{o}del-type spacetimes}
Let $(M,g)$ be a G\"{o}del-type spacetime where $g$ is described by the following Lorentzian metric
\begin{equation*}
g=[dt + H(r)d\varphi]^2-dr^2-P^2(r)d\varphi^2-dz^2,
\end{equation*}
here $t$ is the time variable and $(r,\varphi,z)$ are the usual cylindrical coordinates and so $r\geq0,$ $\varphi\in\mathbb{R}$
(undetermined for $r=0)$ and $z\in \mathbb{R}$. Moreover, $H(r)$ and $P(r)$ are arbitrary smooth functions on $M$ and $g$ is non degenerate where $P(r)\neq0$. We can rewrite $g$ as following
\begin{equation}
g=[dx^1 + H(x^2)dx^3]^2-(dx^2)^2-P^2(x^2)(dx^3)^2-(dx^4)^2, \label{equ3}
\end{equation}
such that $(x^1,x^2,x^3,x^4)=(t,r,\varphi,z)$. The authors in \cite{zaeim2} proved that the G\"{o}del-type metric
\begin{equation}
\widehat{g}=[dx^1 + \widehat{H}(x^2)dx^3]^2-(dx^2)^2-\widehat{P}^2(x^2)(dx^3)^2-(dx^4)^2, \label{equ4}
\end{equation} is harmonic with respect to $g$ \eqref{equ3} if and only if
\begin{equation}
\widehat{H}'(x^2)(\widehat{H}(x^2)-H(x^2))-\widehat{P}(x^2)\widehat{P}'(x^2)+P(x^2)P'(x^2)=0. \label{equ5}
\end{equation}
Using Theorem \ref{thm3} and \eqref{equ5}, we deduce
\begin{thm} Let $(M,g)$ be a G\"{o}del-type spacetime with the metric \eqref{equ3} and $\widehat{g}$ the Lorentzian metric given by \eqref{equ4}. Then the Sasaki lift metric $\widehat{g}^{\widetilde{S}}$ is harmonic with respect to $g^{\widetilde{S}}$ if and only if \eqref{equ5} is satisfied.
\end{thm}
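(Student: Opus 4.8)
The plan is to obtain this statement as a direct application of Theorem \ref{thm3}, with no new computation required. First I would record that the Gödel-type metrics $g$ in \eqref{equ3} and $\widehat{g}$ in \eqref{equ4} are genuine pseudo-Riemannian (Lorentzian) metrics on the open set where $P(x^2)\neq 0$ and $\widehat{P}(x^2)\neq 0$, and that consequently their Sasaki lifts $g^{\widetilde{S}}$ and $\widehat{g}^{\widetilde{S}}$ on the cotangent bundle $T^*M$ are well defined and non-degenerate; indeed, from \eqref{eq25} one has $\det G^{\widetilde{S}}=\det(g_{ij})\cdot\det(g^{ij})=1\neq 0$, so both metrics fall squarely within the hypotheses of Theorem \ref{thm3}.

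Next I would simply invoke Theorem \ref{thm3} with $M$ the Gödel-type spacetime and the pair $(g,\widehat{g})$ as above: the theorem asserts that $\widehat{g}^{\widetilde{S}}$ is harmonic with respect to $g^{\widetilde{S}}$ if and only if $\widehat{g}$ is harmonic with respect to $g$, that is, if and only if the identity map $I\colon (M,g)\to (M,\widehat{g})$ satisfies the harmonicity condition \eqref{eq4}. Then I would quote the computation carried out in \cite{zaeim2}, which established that this identity map is harmonic precisely when \eqref{equ5} holds. Concatenating the two equivalences yields the desired biconditional.

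Since the argument is merely the composition of Theorem \ref{thm3} with an already-published identity, there is no genuine analytic obstacle here; the only point deserving a line of care is confirming that the hypothesis "$\widehat{g}$ pseudo-Riemannian" of Theorem \ref{thm3} is honestly met on the relevant domain, so that its equivalence may be transported verbatim to the Gödel-type setting. One could, if desired, also verify \eqref{equ5} independently by computing the Christoffel matrices $\Gamma^k$, $\widehat{\Gamma}^k$ of \eqref{equ3} and \eqref{equ4} and evaluating $\operatorname{tr}(G^{-1}(\widehat{\Gamma}^k-\Gamma^k))$ for each $k$, but this is exactly the calculation already performed in \cite{zaeim2} and need not be repeated.
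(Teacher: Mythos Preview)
Your proposal is correct and follows exactly the paper's own approach: the paper simply writes ``Using Theorem \ref{thm3} and \eqref{equ5}, we deduce'' before stating the theorem, i.e., it combines the general equivalence of Theorem \ref{thm3} with the harmonicity criterion for G\"{o}del-type metrics already established in \cite{zaeim2}. Your added remark on non-degeneracy is a welcome bit of care but does not depart from the paper's strategy.
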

\subsection{Non-reductive homogeneous manifolds of dimension four}
A (connected) pseudo-Riemannian manifold $(M,g)$ is said to be homogeneous if it admits a group $G$ of isometries, acting transitively on it. In this case, $(M,g)$ can be identified with $(G/H,g)$, where $H$ is the isotropy group at a fixed point of $M$  and $g$  is invariant pseudo-Riemannian metric. A homogeneous pseudo-Riemannian manifold $(M,g)$ is reductive if the Lie algebra $\mathfrak{g}$ of $G$ may be decomposed into a vector space direct sum
$\mathfrak{g}=\mathfrak{h}\oplus\mathfrak{m}$ where $\mathfrak{m}$ is an $Ad(H)$-invariant complement to $\mathfrak{h}$. Non-reductive homogeneous manifolds of dimension four were classified in \cite{fels}, in terms of the corresponding non-reductive Lie algebra presentations. This classification contains 8 classes $A_1,A_2,A_3,A_4,A_5$ and $B_1,B_2,B_3$. The classes $A_1,$ $A_2,$ $A_3$ contain both Lorentzian and neutral examples, classes $A_4,A_5$ are just Lorentzian and $B_1,B_2,B_3$ are always of neutral signature. Also the description in coordinates system for the invariant metrics on the spaces mentioned were obtained in \cite{calvaruso}. Any invariant metric on a non-reductive homogeneous four manifold is described by some arbitrary coefficients $a,b,\ldots$, see \cite{calvaruso,zaeim1}. We take an arbitrary invariant metric $\widehat{g}$, which is defined by arbitrary coefficients $\widehat{a},\widehat{b},\ldots$. The authors in \cite{zaeim1} obtained the conditions under which the metric $\widehat{g}$ is harmonic with respect to the metric $g$. By using Theorem \ref{thm3} and Theorem 5.1 in \cite{zaeim1}, we obtain the following
\begin{thm} Let $(M=G/H,g)$ be a non-reductive homogeneous manifold of dimension four, equipped with invariant metric $g$. Then the Sasaki metric $\widehat{g}^{\widetilde{S}}$ is harmonic with respect to $g^{\widetilde{S}}$ if and only if one of the following cases occurs
\begin{enumerate}
  \item $(G/H,g)$ is of type $A_1$ and $c\widehat{a}=a\widehat{c}$.
  \item $(G/H,g)$ is of type $A_2$ and one of the following cases occurs
  \begin{enumerate}
    \item $\alpha=0,$ $c\widehat{a}=a\widehat{c},$
    \item $\alpha=\cfrac{1}{4},$ $a\widehat{d}=d\widehat{a},$
    \item $\alpha= arbitrary,$ $c\widehat{a}=a\widehat{c},$ $a\widehat{d}=d\widehat{a}.$
  \end{enumerate}
  \item $(G/H,g)$ is of type $A_3$ for $\varepsilon=\pm 1$ and $b\widehat{a}=a\widehat{b},$ $b\widehat{c}=c\widehat{b}$.
  \item $(G/H,g)$ is of type $A_4,A_5,B_2$ or $B_3$.
  \item $(G/H,g)$ is of type $B_1$ and $c\widehat{a}=a\widehat{c},$ $a\widehat{d}=d\widehat{a}$.
\end{enumerate}

\end{thm}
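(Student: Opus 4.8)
The plan is to reduce the assertion about the cotangent bundle to the already known classification on the base manifold, using Theorem \ref{thm3} as the bridge. By Theorem \ref{thm3}, the Sasaki lift metric $\widehat{g}^{\widetilde{S}}$ is harmonic with respect to $g^{\widetilde{S}}$ if and only if $\widehat{g}$ is harmonic with respect to $g$ on $M$ itself. Hence the entire proof reduces to determining, for an invariant metric $g$ on a non-reductive homogeneous four-manifold $G/H$, exactly when another invariant metric $\widehat{g}$ is harmonic with respect to $g$.

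For this second step I would invoke the coordinate descriptions of the invariant metrics on the eight classes $A_1,\ldots,A_5,B_1,B_2,B_3$ obtained in \cite{calvaruso}, writing $g$ in terms of the arbitrary structure coefficients $a,b,c,d,\ldots$ (together with the parameters $\alpha$ in type $A_2$ and $\varepsilon$ in type $A_3$) and $\widehat{g}$ in terms of $\widehat{a},\widehat{b},\ldots$. Substituting the matrices $G$, $G^{-1}$ and the Christoffel matrices $\Gamma^k$, $\widehat{\Gamma}^k$ into the harmonicity criterion \eqref{eq4}, that is $tr(G^{-1}(\widehat{\Gamma}^k-\Gamma^k))=0$ for all $k=1,\ldots,4$, yields, after a routine but lengthy computation carried out class by class, precisely the algebraic conditions listed in the statement and in Theorem 5.1 of \cite{zaeim1}: $c\widehat{a}=a\widehat{c}$ in type $A_1$; the three sub-cases according to the value of $\alpha$ in type $A_2$; $b\widehat{a}=a\widehat{b}$ and $b\widehat{c}=c\widehat{b}$ in type $A_3$; no condition at all in types $A_4,A_5,B_2,B_3$, where the identity map is automatically harmonic; and $c\widehat{a}=a\widehat{c}$, $a\widehat{d}=d\widehat{a}$ in type $B_1$. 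Combining the two steps gives the claimed equivalence.

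The only genuinely laborious part is the case-by-case verification of \eqref{eq4} over the eight metric families; but this is exactly the content of Theorem 5.1 in \cite{zaeim1}, which we may quote, so modulo that citation the argument is immediate. A minor point to keep straight is the bookkeeping of which structure constants survive in each class, so that the ``trivial'' classes $A_4,A_5,B_2,B_3$ really do produce no constraint, along with the treatment of the special values $\alpha=0$ and $\alpha=\frac14$ in type $A_2$ and $\varepsilon=\pm1$ in type $A_3$, each of which must be handled separately when forming the trace.
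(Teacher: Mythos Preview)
Your proposal is correct and matches the paper's own argument exactly: the paper simply states that the theorem follows ``by using Theorem \ref{thm3} and Theorem 5.1 in \cite{zaeim1}'', which is precisely your two-step reduction (Theorem \ref{thm3} to pass from $T^{*}M$ to $M$, then quote the base-manifold classification from \cite{zaeim1}). Your additional commentary on what the case-by-case verification would involve is accurate but goes beyond what the paper itself records.
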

 \subsection{Egorov spaces}
 Let $(\mathbb{R}^m,g_f)$, $m\geq 3$ be an Egorov space. The Sasaki lift metric of $g_f$ with respect to the coordinates $(x^i,x^{\widetilde{i}})$ in the cotangent bundle $T^{\ast}\mathbb{R}^m$ is given by
\begin{equation}\label{eqF}
   g_f^{\widetilde{S}}=(g_{f})_{ij}dx^{i}dx^{j}+
   (g_{f})^{ij}\partial_{x^{\widetilde{i}}}^{\ast}\partial_{x^{\widetilde{j}}}^{\ast},
\end{equation}
Then, taking into account \eqref{eq16}, \eqref{eq19-1} and \eqref{eqF}, we get 
\begin{align*}
g_f^{\widetilde{S}}&=f\sum^{m-2}_{i=1}(dx^i)^2+2dx^{m-1}dx^m+\frac{1}{f}\sum^{m-2}_{i=1}(dx^{\widetilde{i}})^2+
 2dx^{\widetilde{m-1}}dx^{\widetilde{m}},\\&+ \frac{f'}{f}\sum^{m-2}_{i=1}x^{\widetilde{m-1}}dx^{\widetilde{i}}dx^{i}-
 \frac{f'}{f}\sum^{m-2}_{i=1}x^{\widetilde{i}}dx^{\widetilde{m-1}}dx^{i}+ \frac{(f')^{2}}{4f}(x^{\widetilde{m-1}})^{2}\sum^{m-2}_{i=1}(dx^i)^{2}.
 \end{align*}
Combining Theorem \ref{thm2} and Theorem \ref{thm3}, we deduce
 \begin{thm} Let $(\mathbb{R}^m,g_f)$, $m\geq 3$ be an Egorov space. Then the Sasaki metric $\widehat{g}_{\widehat{f}}^{\widetilde{S}}$ is harmonic with respect to $g_{f}^{\widetilde{S}}$ if and only if
\begin{equation*}
\widehat{f}'=f'.
\end{equation*}
 \end{thm}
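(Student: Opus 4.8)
The plan is to obtain the statement as an immediate consequence of Theorem \ref{thm3} and Theorem \ref{thm2}, both already established earlier in the paper, without computing with the lifted metric $g_f^{\widetilde{S}}$ at all. The only preliminary point is to note that the hypotheses of those theorems are met: for $m \geq 3$ and $f$ (resp. $\widehat{f}$) a positive smooth function of one variable, both $g_f$ and $\widehat{g}_{\widehat{f}}$ defined by \eqref{eq16}--\eqref{eq17} are Lorentzian, hence pseudo-Riemannian, metrics on $\mathbb{R}^m$, so they are admissible choices for the metrics $g$ and $\widehat{g}$ appearing in Theorem \ref{thm3} and Theorem \ref{thm2}.

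First I would apply Theorem \ref{thm3} with $M = \mathbb{R}^m$, $g = g_f$ and $\widehat{g} = \widehat{g}_{\widehat{f}}$: this yields the equivalence between ``$\widehat{g}_{\widehat{f}}^{\widetilde{S}}$ is harmonic with respect to $g_f^{\widetilde{S}}$'' and ``$\widehat{g}_{\widehat{f}}$ is harmonic with respect to $g_f$''. Then Theorem \ref{thm2} identifies the second condition as $\widehat{f}' = f'$, and chaining the two equivalences gives the claim. There is no genuine obstacle here: all the substance lies in the two theorems being invoked --- Theorem \ref{thm3}, whose key point is that the off-diagonal curvature block of $^{\widetilde{S}}\Gamma^{\widetilde{k}}$ in \eqref{eq27} produces an antisymmetric matrix $\widehat{R}_k - R_k$ which, traced against the symmetric $G^{-1}$, contributes nothing to the harmonicity system \eqref{eq28}, and Theorem \ref{thm2}, whose key point is the Christoffel computation \eqref{eq19-1} taken from \cite{batat}.

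If one preferred a self-contained argument, one could instead substitute the Egorov Christoffel symbols \eqref{eq19-1} into \eqref{eq25}--\eqref{eq27}, form the matrices $^{\widetilde{S}}\Gamma^k, {}^{\widetilde{S}}\Gamma^{\widetilde{k}}$ and their hatted analogues, and check that the only surviving entry of the harmonicity system \eqref{eq28} is again proportional to $(m-2)(f' - \widehat{f}')/f$; but since this merely reproduces, in the Egorov case, the general mechanism of Theorem \ref{thm3}, I would not carry it out.
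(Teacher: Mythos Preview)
Your proposal is correct and matches the paper's own approach exactly: the paper simply states that the theorem follows by combining Theorem \ref{thm2} and Theorem \ref{thm3}, which is precisely the chain of equivalences you describe. Your additional remarks about why the hypotheses are met and about the alternative direct computation are sound but go beyond what the paper itself writes.
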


   \end{document}